\documentclass{amsart}

\usepackage{amsfonts}
\usepackage{amssymb}
\usepackage{amscd}

\evensidemargin0.cm \oddsidemargin0.cm
\textheight20.7cm

\newtheorem{thm}{Theorem}[section]
\newtheorem{lem}[thm]{Lemma}

\newtheorem{cor}[thm]{Corollary}

\theoremstyle{definition}

\newtheorem{problem}{Problem}

\begin{document}

\title[C*-submodule preserving module mappings]{C*-submodule preserving module mappings on Hilbert C*-modules}
\author{Michael Frank}
\address{Hochschule f\"ur Technik, Wirtschaft und Kultur (HTWK) Leipzig, Fakult\"at Informatik und Medien, PF 301166, D-04251 Leipzig, Germany.}
\email{michael.frank@htwk-leipzig.de, michael.frank.leipzig@gmx.de}

\subjclass{Primary 46L08; Secondary 46C05, 47B49. }

\keywords{Hilbert C*-modules; C*-submodule preserving module mappings; mapping preserved subbimodule sets}

\dedicatory{in the memory of Gerard John Murphy}

\begin{abstract}
Let $A$ be a (non-unital, in general) C*-algebra with center $Z(M(A))$ of its multiplier algebra, and let $\{ X, \langle .,. \rangle \}$ be a full Hilbert $A$-module. Then any bijective bounded module morphism $T$, for which every norm-closed $A$-submodule of $X$ is invariant, is of the form $T=d \cdot {\rm id}_X$ where $d \in Z(M(A))$ is invertible. As an example of a merely injective bounded module operator with that preserver property serves $T =d \cdot {\rm id}_X$ where $|d| \in Z(M(A))$ has a positive spectrum, but not bounded away from zero. The same assertions are true if the restriction on the C*-submodules to be norm-closed is dropped. \newline
From a different point of view, for two given strongly Morita equivalent C*-algebras $A$ and $B$ and a Hilbert $B$-$A$ bimodule $\{ X, \langle .,. \rangle \}$ with faithful compact right action of $B$, for any two two-sided norm-closed ideals $I \subseteq A$, $J \subseteq B$,  any full compatible norm-closed Hilbert $J$-$I$ subbimodule of $X$ is invariant for any left bounded $B$-module operator and any right bounded $A$-module operator. So these subsets of submodules of $X$ cannot rule out any bounded module operator as a non-preserver of that subset collection, however any single element of this subset collection is preserved by any bounded module operator on $X$. \newline
For any $B$-$A$ imprimitivity bimodule both the C*-valued inner product values are always preserved by bijective bounded module operators $T$ on $X$ iff $T= u \cdot {\rm id}_X$ for a unitary element $u\in Z(M(A))$. 
\end{abstract}

\maketitle

One can easily show that injective bounded linear operators which preserve all Hilbert subspaces of a Hilbert space $H$ have to be equal to a certain non-zero complex multiple of the identity operator on $H$. They are surjective automatically. The relevant test spaces are the one-dimensional subspaces that are automatically norm-complete, and thereafter any two-dimensional subspace spanned by two non-colinear elements and, therein, the sum of them.  For injective bounded module operators preserving all norm-closed C*-submodules of a given Hilbert C*-module over a given C*-algebra $A$ the answer is much more difficult to find, cf.~Kamran Sharifi's problem \cite[Problem 2.16]{Sharifi}. 

\begin{problem} (\cite[Problem 2.16]{Sharifi})
Let $X$ be a (left) Hilbert $A$-$A$-bimodule. For which C*-algebras $A$ does the following assertion hold?

If every closed submodule of $X$ is invariant under a bounded adjointable operator $T: X \to X$ then $T$ is a $*$-coefficient morphism, i.e. there exists a non-zero element $a \in A$ such that $T= {\rm id}_X \cdot a$, where ${\rm id}_X$ is the identity operator on $X$ (and $a$ acts as right bounded module operator ${\rm id}_X \cdot a$ on $X$).
\end{problem} 

The formulation of the problem needs some explanations and more concrete settings: On one side, the set of submodules of $X$, each element of which should be $T$-invariant, has to be precisely described for any particular investigation. On the other side, the operator $T$ has to be characterized by specific mapping properties and by the part of properties of the submodules which has to be preserved. Moreover, the notion of a 'Hilbert $A$-$A$ bimodule' has to be defined more precise since several definitions of similar notions can be found in the literature. Last but not least, the switch to the C*-module point of view brings into consideration many more additional new situations, requiring separate treatments. For a seminal survey on preserver problems on algebraic structures of operators on linear spaces and on function spaces see L.~Moln\'ar's book \cite{Molnar_2007} and a survey paper by I.~Chalendar and J.~R.~Partington \cite{CP}. On orthogonality and isometry in Hilbert C*-modules see publications \cite{FMP_2011,LNW,Lance_1994,F_1997,Solel_2001}. A comprehensive survey on preserver problems in the research field of C*-algebras can be found in \cite{KST_2025}. More general approaches are discussed in \cite{LP_2001,LT_2024}.

We are going to give a comprehensive answer on the structure of such bounded module operators on suitable sets of $A$-submodules resorting to bijective operators in section 1, and to discuss Sharifi's assertion in several (bi)module situations in more detail in section 2. A complete solution for merely injective bounded module preservers is out of reach at present. The same situation appears for general $B$-$A$ correspondences for admissible pairs of C*-algebras $A$ and $B$. 

\section{Injective bounded module operators as C*-submodule preservers}

Let  $A$  be a C*-algebra and  $X$  be a complex-linear space and (left)  $A$-module where both the complex-linear structures are compatible. Let  $X$  admit a map  $\langle .,. \rangle: X \times X \to A$,  $A$-linear in the first and  conjugate-$A$-linear in the second variable such that
\begin{enumerate}
   \item $\langle x,y \rangle = \langle y,x \rangle^*$    for any  $x,y \in X$,
   \item $0 \leq \langle x,x\rangle$   for any  $x \in X$,
   \item $\langle x,x \rangle = 0$   iff   $x=0$.
\end{enumerate}
Then  $\|x\| := \| \langle x,x \rangle \|_A^{1/2}$  defines an $A$-module norm on  $X$. We define Hilbert $A$-modules to be full, if the norm-closure of the linear hull of the $A$-valued inner product values $\langle X,X \rangle$ coincides with $A$. 
For two suitable C*-algebras $A$ and $B$ we say that $A$ and $B$ are strongly Morita equivalent iff there exists a full left Hilbert $A$-module which is at the same time a full right Hilbert $B$-module, and the two C*-valued inner products on it fulfil the equality $\langle x,y \rangle_A z =  x \langle y,z \rangle_B$ for any $x,y,z \in X$.

We make use of a fact obtained by E.~C.~Lance to prove the main theorem:

\begin{lem}  {\rm (}\cite[Lemma]{Lance_1994}{\rm )} \label{Lance}
    Suppose, $a,b$ are positive elements of a C*-algebra, and that $\|ac\| = \|bc\|$  for any element $c$ of that C*-algebra. Then $a=b$.
\end{lem}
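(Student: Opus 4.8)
The plan is to reduce the claim to a statement about pure states and then to apply the excision theorem of Akemann, Anderson and Pedersen.

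Since $a$ and $b$ are positive, it suffices to prove $a^2=b^2$, for then $a=(a^2)^{1/2}=(b^2)^{1/2}=b$ by uniqueness of the positive square root. Using $a=a^*$ one has $\|ac\|^2=\|(ac)^*(ac)\|=\|c^*a^2c\|$ for every $c$, and similarly with $b$, so the hypothesis is equivalent to $\|c^*a^2c\|=\|c^*b^2c\|$ for all $c\in A$; in particular $\|c\,a^2c\|=\|c\,b^2c\|$ for every \emph{positive} $c\in A$. Now $a^2-b^2$ is self-adjoint, and the norm of a self-adjoint element of $A$ is the supremum of $|\phi(\cdot)|$ taken over the pure states of $A$ (the affine continuous functional $\psi\mapsto\psi(a^2-b^2)$ on the quasi-state space attains its extrema at extreme points, i.e. at pure states or at $0$). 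Hence it is enough to show $\phi(a^2)=\phi(b^2)$ for every pure state $\phi$ of $A$.

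So fix a pure state $\phi$. Here I would invoke excision: there is a net $(c_\lambda)$ of positive norm-one elements of $A$ with $\|c_\lambda x c_\lambda-\phi(x)c_\lambda^{2}\|\to0$ for every $x\in A$. Applying this with $x=a^2$ and with $x=b^2$, and using that $\|\phi(a^2)c_\lambda^{2}\|=\phi(a^2)$ and $\|\phi(b^2)c_\lambda^{2}\|=\phi(b^2)$ (the scalars $\phi(a^2),\phi(b^2)$ are nonnegative and $\|c_\lambda^{2}\|=\|c_\lambda\|^2=1$), the reverse triangle inequality yields $\|c_\lambda a^2c_\lambda\|\to\phi(a^2)$ and $\|c_\lambda b^2c_\lambda\|\to\phi(b^2)$. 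But $\|c_\lambda a^2c_\lambda\|=\|c_\lambda b^2c_\lambda\|$ for each $\lambda$ by the reformulated hypothesis, so the two limits coincide: $\phi(a^2)=\phi(b^2)$. Combined with the reductions above this gives $a^2=b^2$ and hence $a=b$.

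The main obstacle is precisely the step that forces the use of excision: obtaining a \emph{two-sided} norm estimate for $\|c_\lambda a^2 c_\lambda\|$. The straightforward route through the GNS representation $\pi_\phi$ of $\phi$ — choosing $c_\lambda\ge0$ with $\pi_\phi(c_\lambda)\xi_\phi\to\xi_\phi$ by Kadison's transitivity theorem — only produces the lower bound $\liminf_\lambda\|c_\lambda a^2c_\lambda\|\ge\phi(a^2)$, which cannot separate the $a$- and $b$-sides. For C*-algebras with an abundance of projections (for instance matrix algebras, where one may take $c_\lambda$ to be an honest minimal projection at $\phi$) the argument is then elementary; in the general case the excising net of Akemann, Anderson and Pedersen is exactly what supplies the matching upper bound, and that is where the content of the lemma resides.
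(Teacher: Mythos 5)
Your argument is correct, but note that there is nothing in the paper to compare it with: the lemma is imported verbatim from Lance's 1994 note and used as a black box, so the paper's ``proof'' is the citation. Judged on its own, your chain of reductions is sound. The C*-identity converts the hypothesis into $\|c^*a^2c\|=\|c^*b^2c\|$ for all $c$; a self-adjoint element annihilated by every pure state is zero (the Bauer maximum principle on the quasi-state space, exactly as you indicate, and this works in the non-unital setting); and the Akemann--Anderson--Pedersen excision theorem supplies, for each pure state $\phi$, positive norm-one elements $c_\lambda$ with $\|c_\lambda a^2c_\lambda\|\to\phi(a^2)$ and $\|c_\lambda b^2c_\lambda\|\to\phi(b^2)$, which together with the hypothesis forces $\phi(a^2)=\phi(b^2)$, hence $a^2=b^2$ and then $a=b$ by uniqueness of positive square roots. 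You have also correctly located the crux: a Kadison-transitivity choice of $c$ controls only the representation attached to $\phi$ and yields just the lower bound $\liminf\|c_\lambda a^2c_\lambda\|\ge\phi(a^2)$, whereas excision gives a genuine norm approximation $c_\lambda a^2 c_\lambda\approx\phi(a^2)c_\lambda^2$ and therefore the matching upper bound uniformly over all irreducible representations. The only caveat is economy: you are invoking a 1986 theorem that is considerably deeper than the lemma itself, whereas Lance's original three-page paper contains its own short proof; for the purposes of this paper the citation suffices, but if a self-contained argument is wanted one should compare with the source.
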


\begin{thm}  \label{one-sided}
    Let $A$ be a (non-unital, in general) C*-algebra with center $Z(M(A))$ of its multiplier algebra, and let $X$ be a full Hilbert $A$-module. Then any bijective bounded module morphism $T:X \to X$, for which every norm-closed $A$-submodule of $X$ is invariant, is of the form $T=d \cdot {\rm id}_X$ where $d \in Z(M(A))$ and invertible.
\end{thm}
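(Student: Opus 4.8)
The plan is to run the Hilbert-space argument with the cyclic submodules $\overline{Ax}$ in the role of the one-dimensional subspaces, the two genuinely new features being that boundedness of $T$ (and not merely its linearity) must be used to force $T$ to act as a scalar on each $\overline{Ax}$, and that the resulting ``local scalars'' then have to be glued into a single element of $M(A)$. First I would note that for every $x\in X$ the submodule $\overline{Ax}$ is norm-closed and $A$-invariant, indeed $M(A)$-invariant since $(ca)x=c(ax)\in Ax$ for $c\in M(A)$, $a\in A$; so by hypothesis $Tx\in\overline{Ax}$, and $T$ restricts to a bounded module map $T_x$ on $\overline{Ax}$ with $T_x(ax)=aTx$. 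Putting $h:=\langle x,x\rangle^{1/2}$, the assignment $ax\mapsto ah$ extends to an isometric $A$-module isomorphism of $\overline{Ax}$ onto $\overline{Ah}\subseteq A$ carrying $Tx$ to some $k\in\overline{Ah}$ and $T_x$ to $ah\mapsto ak$. Boundedness now reads $\|ak\|\le\|T\|\,\|ah\|$, equivalently $\|akk^{*}a^{*}\|\le\|T\|^{2}\|ah^{2}a^{*}\|$, for all $a\in A$; from this C*-inequality -- this is the point where Lemma \ref{Lance} of E.\,C.\,Lance, applied to the positive elements $kk^{*}$ and $\|T\|^{2}h^{2}$ tested against all $a$, comes in -- I expect to conclude $k=c_{x}h$ for a unique $c_{x}$ in the multiplier algebra of the closed ideal generated by $\langle x,x\rangle$, with $\|c_{x}\|\le\|T\|$, and hence $Tx=c_{x}x$ with $T_x$ the associated scalar multiplication.

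Next I would compare the $c_{x}$. If $\langle x,y\rangle=0$ then $\overline{Ax}\cap\overline{Ay}=\{0\}$: a common element $z=\lim a_{n}x=\lim b_{m}y$ satisfies $\langle z,z\rangle=\lim_{n,m}a_{n}\langle x,y\rangle b_{m}^{*}=0$. Then the orthogonal sum $\overline{Ax}\oplus\overline{Ay}=\overline{Ax+Ay}$ is norm-closed with contractive coordinate projections, which are automatically module maps. Since $T$ preserves both summands it is block diagonal on this sum; applying that to the further invariant submodule $\overline{A(x+y)}$ and splitting its approximating sequence through the projections produces one net $(d_{k})$ in $A$ with $d_{k}x\to Tx$ and $d_{k}y\to Ty$ at once, and the same works for any finite pairwise ``orthogonal'' family via $\overline{A(x_{1}+\dots+x_{n})}$. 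Running this over a maximal ``orthogonal'' family and using fullness of $X$ to fill out $A$ from the inner products thus encountered -- together with a uniform-boundedness and strict-convergence argument, and Lemma \ref{Lance} once more to pin down the requisite positive elements -- the $c_{x}$ should assemble into a single $c\in M(A)$ with $Tx=cx$ for all $x$, injectivity of $T$ being invoked to rule out degenerate gluings. Centrality is then free: $c(ax)=T(ax)=aT(x)=a(cx)$ gives $(ca-ac)x=0$ for all $a,x$, so $(ca-ac)\langle x,y\rangle=0$ for all $x,y$, and fullness forces $ca=ac$ in $M(A)$; thus $c\in Z(M(A))$ and $T=c\cdot\mathrm{id}_{X}$.

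The step I expect to be the real obstacle is this last globalisation. Producing $c_{x}$ on each individual $\overline{Ax}$ is essentially a computation inside a C*-algebra, but turning the family $(c_{x})$ into one multiplier of $A$ is delicate, because a maximal ``orthogonal'' family in a Hilbert C*-module need not span a dense submodule -- already for $X=A=C[0,1]$ the single element $x(t)=t$ gives a maximal ``orthogonal'' family with $\overline{Ax}=\{f:f(0)=0\}\neq X$ -- so one must use that $T$ is a module map on all of $X$, not just on the test submodules, in order to see that the candidate $c$ really extends from the ideal on which it visibly lives to a multiplier of $A$ and acts correctly everywhere. This is also the only place where I expect injectivity of $T$ to be genuinely needed.
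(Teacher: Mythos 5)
Your proposal takes a genuinely different route from the paper -- a direct local-to-global construction of the multiplier -- and, as you yourself suspect, it breaks at the globalisation step; the local step is also shakier than you present it. Concretely: Lemma \ref{Lance} is an \emph{equality} statement ($\|ac\|=\|bc\|$ for all $c$ forces $a=b$ for positive $a,b$); it does not convert the one-sided estimate $\|akk^{*}a^{*}\|\le\|T\|^{2}\|ah^{2}a^{*}\|$ into a factorisation $k=c_{x}h$ with $c_{x}$ a multiplier. For that you would need a Douglas-type majorisation--factorisation lemma, and in a general C*-algebra such factorisations only come with a loss (typically $k=c\,h^{1+\varepsilon}$, or a factor living in the enveloping von Neumann algebra rather than in the multiplier algebra of the relevant ideal). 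More seriously, the passage from the family $(c_{x})$ to a single $c\in M(A)$ is exactly the part you leave as ``should assemble'': you correctly observe that a maximal orthogonal family need not generate a dense submodule, so the Hilbert-space gluing pattern has no analogue, and no replacement argument is supplied. Injectivity, which you invoke only ``to rule out degenerate gluings'', is in fact needed for a quite different and identifiable purpose (surjectivity of the isometric part of $T$).

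The paper sidesteps both problems. The only information it extracts from the invariance hypothesis is that $T$ is orthogonality preserving: if $\langle x,y\rangle=0$ then $Tx\in\overline{Ax}$ and $Ty\in\overline{Ay}$, and these closed cyclic submodules are mutually orthogonal, so $\langle Tx,Ty\rangle=0$. The globalisation you are missing is then delivered wholesale by the structure theorem for orthogonality-preserving bounded module maps on full Hilbert C*-modules (\cite[Thm.~3.2]{LNW}, \cite[Thm.~4]{FMP_2011}): there exist a positive $c\in Z(M(A))$ and an isometric module map $S$ with $T(x)=S(cx)$. Injectivity together with invariance of the cyclic submodules forces $S$ to be surjective, and Lance's lemma is then used in its correct equality form to get $|S(x)|=|x|$, whence $S$ is multiplication by a central unitary and $T=c\cdot\mathrm{id}_{X}$. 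If you wish to keep your self-contained local-multiplier strategy, you must either supply the gluing argument yourself -- which essentially amounts to reproving the Leung--Ng--Wong theorem -- or cite it.
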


\begin{proof}
Consider the $A$-submodules generated by a single element $x \in X$, i.e.~sets of the form $\{ ax : a \in A \}$. They might be not norm-closed, so we take their norm-closure $Y_x$ to continue. Now, consider two non-zero elements $x,y \in X$ that are orthogonal to each other. Obviously, the sets $\{ ax : a \in A \}$ and $\{ by : b \in A \}$ are orthogonal to each other too, and so are their norm-completions $Y_x$ and $Y_y$ by norm-continuity of the orthogonality relations. Since $T$ preserves $Y_x$ and $Y_y$ by supposition, the non-zero, by assumption, elements $T(x)$ and $T(y)$ are orthogonal to each other, too. Since $x,y \in X$ were arbitrarily chosen the bounded module map $T$ has to be orthogonality preserving on $X$. 

Since $X$ is supposed to be full there exists a (unique) positive invertible element $c \in Z(M(A))$ such that $\langle T(x),T(y) \rangle = c \langle x,y \rangle$ for any $x,y \in X$, see \cite[Thm.~3.2]{LNW} and \cite[Thm.~4]{FMP_2011}. Equivalently, there exist a (unique) positive element $c^{1/2} \in Z(M(A))$ and an isometric bijective module isomorphism $S$ of $X$ such that $T(x) = S(c^{1/2}x)$ for any  $x \in X$, cf.~\cite[Thm.~3.4(i)]{LNW}. The element $c^{1/2} \in Z(M(A))$ has to be invertible since $T$ is assumed to be bijective. Note that isometric module isomorphisms of Hilbert C*-modules are unitary module isomorphisms, cf.~\cite[Thm.]{Lance_1994}, \cite[Thm. 5]{F_1997}, \cite[Thm. 1.1]{Solel_2001}. Furthermore, the center of a multiplier algebra of a C*-algebra might be larger than the multiplier C*-algebra of the center of the initiating C*-algebra, e.g.~for non-unital C*-algebras and their unital multiplier C*-algebras. 

Every element $x \in X$ gives rise to a positive element $|x|=\langle x,x \rangle^{1/2} \in A$. Moreover, 
\begin{eqnarray}  \label{unitary}
     \| a \cdot |S(x)| \|  & = &   \| a \langle S(x),S(x) \rangle a^* \|^{1/2} = \| \langle S(ax),S(ax) \rangle \|^{1/2}   \\\nonumber
                          & = &   \| S(ax) \| = \| ax \|  \quad ({\rm isometry})\\\nonumber
                          & = &   \| a \langle x,x \rangle a^* \|^{1/2}  = \| a \cdot |x| \|
\end{eqnarray}           
for any $a \in A$, any $x \in X$. By Lemma  \ref{Lance}  we obtain $|S(x)|=|x|$  for any $x \in X$. We extend $X$ canonically to a larger Hilbert $A^{**}$-module using W.~L.~Paschke's modular tensor product construction in \cite[Section 4]{Paschke}. Note that $X$ is isometrically embedded. For Hilbert $A^{**}$-modules the $A^{**}$-valued inner product on them can always be extended to their $A^{**}$-dual Banach $A^{**}$-modules turning the latter into selfdual Hilbert $A^{**}$-modules with W*-algebras as their C*-algebras of all bounded (adjointable) $A^{**}$-linear operators on them, cf.~\cite[Thms.~3.2, 3.10, Prop.~3.6]{Paschke}. Bounded modular operators on $X$ admit a unique isometric extension to bounded modular operators on the derived selfdual Hilbert $A^{**}$-module, cf.~\cite[Section 4]{Paschke}. Beside this, surjective isometries of Hilbert C*-modules lift to the respective linking C*-algebras, cf.~\cite[Thm.~3.2]{Solel_2001}.

Consequently, the multiplier algebra of the linking algebra of that selfdual Hilbert $A^{**}$-module is a W*-algebra, and every element in it admits polar decomposition. So $u_x \cdot S(x)=x \in A$ for some unitary $u_x$ in the bidual von Neumann algebra $A^{**}$ of $A$ and for any $x \in X$, with reference to the fact that W.~L.~Paschke's construction is adapted to the universal $*$-representation of $A$ in $A^{**}$, cf.~\cite[Thm.~3.7.7, Prop.~3.7.8]{Ped2018}. For two different elements $x,y \in X$ one has
\begin{eqnarray*}
     x+y &=& u_{x+y} S(x+y) = u_{x+y} (S(x)+S(y)) \\
           &=& u_x S(x) + u_y S(y)
\end{eqnarray*}
Since $X$ is assumed to be full and $x,y \in X$ are arbitrarily chosen the unitary elements $u_x$ do not depend on $x \in X$, but only on the surjective isometry $S$. Consequently, since $\{ ax: a \in A, x \in X \}$ is invariant w.r.t.~the action of $S$, norm-closed or not, we have two equalities:
\begin{eqnarray*}
    u \cdot S(ax) & = & u \cdot a \cdot S(x) \quad (S \,\, {\rm is} \,\, {\rm  {C^*-}linear})  \\
                       & = & a\cdot x  = a \cdot u \cdot S(x) \quad ({\rm see} \,\, {\rm (\ref{unitary}))}
\end{eqnarray*}
Therefore, $u \cdot a = a \cdot u$ for any $a \in A$ in $A^{**}$, since $S$ is a surjective isometry, $x \in X$ is arbitrary and $X$ is assumed to be full. So, $u$ belongs to the center of $A^{**}$. By (\ref{unitary}) we have $u \cdot a \cdot S(x) = a \cdot u \cdot S(x) = ax \in X$ for any $a \in A$, $x \in X$. We know $S$ is bijective, $A$ admits its universal $*$-representation in $A^{**}$ and $X$ is full. In this situation the element $u$ has to be a multiplier of $A$, cf.~\cite[Theorem]{Ped}, \cite[Prop.~3.12.3]{Ped2018}. The intersection of the C*-algebra of multipliers of $A$ with the center of $A^{**}$, calculated in $A^{**}$, is the center $Z(M(A))$ of the multiplier algebra of $A$. Hence, $x=u \cdot S(x)$ in $X$, where $u \in Z(M(A))$ unitary. The factors $c^{1/2}$ and $u$ of $Z(M(A))$ can be multiplied and singled out. We arrive at $T= d  \cdot {\rm id}_X$ with $d=c^{1/2}u \in Z(M(A))$. Since $T$ is bijective the factor $d$ has to be invertible.
\end{proof}

While for Hilbert spaces finite-dimensional linear subspaces are norm-closed automatically, this is not the case for most pairings of a non-unital C*-algebra $A$, a full Hilbert $A$-module $X$ and certain singly generated $A$-submodule of $X$. So, we get a similar preserver theorem requesting the invariance of all $A$-submodules, norm-closed or not. 

\begin{cor}  \label{one-sided-all}
    Let $A$ be a (non-unital, in general) C*-algebra with center $Z(M(A))$ of its multiplier algebra, and let $X$ be a full Hilbert $A$-module. Then any bijective bounded module morphism $T: X \to X$, for which every $A$-submodule of $X$ is invariant, is of the form $T=d \cdot {\rm id}_X$ where $d \in Z(M(A))$ is invertible.
\end{cor}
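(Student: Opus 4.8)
The plan is to deduce the corollary directly from Theorem~\ref{one-sided}. The crucial observation is that the hypothesis here is formally stronger than the one in the theorem: every norm-closed $A$-submodule of $X$ is in particular an $A$-submodule of $X$, so if $T$ leaves every $A$-submodule of $X$ invariant, then a fortiori it leaves every norm-closed $A$-submodule invariant. Hence $T$ satisfies the assumptions of Theorem~\ref{one-sided}, and the conclusion $T = c\cdot{\rm id}_X$ with $c\in Z(M(A))$ follows at once.

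If one prefers a self-contained argument that never passes to norm-closures, one can rerun the proof of Theorem~\ref{one-sided} with a single modification. The only place where completeness was invoked there is in establishing that $T$ is orthogonality preserving, via norm-continuity of the orthogonality relation on the closures $Y_x$, $Y_y$. Under the present hypothesis this step becomes elementary: given non-zero $x,y\in X$ with $\langle x,y\rangle=0$, invariance of the singly generated submodules $\{ax:a\in A\}$ and $\{by:b\in A\}$ forces $T(x)=a_0x$ and $T(y)=b_0y$ for suitable $a_0,b_0\in A$, whence $\langle T(x),T(y)\rangle = a_0\langle x,y\rangle b_0^{*}=0$, so $T(x)\perp T(y)$. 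From this point on the proof of Theorem~\ref{one-sided} can be used verbatim; it already carries out each remaining step (the existence of $c\in Z(M(A))$ with $\langle T(x),T(y)\rangle=c\langle x,y\rangle$, the surjectivity of $S$, the Lance-type computation $|S(x)|=|x|$, and the extraction of $c_1,c_2\in Z(M(A))$) for the \emph{non-closed} submodules $\{ax:a\in A,\ x\in X\}$, so nothing else needs to be adjusted.

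I do not expect a genuine obstacle here; the only thing that has to be checked with some care is that no step in the proof of Theorem~\ref{one-sided} secretly used norm-closedness of the submodules involved. A pass through that proof confirms that it does not: the sets $Y_x$ enter only through their being (closed) $A$-submodules, a property shared by their non-closed counterparts, which is precisely what the stronger hypothesis of the corollary makes available.
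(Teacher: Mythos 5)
Your proposal is correct and matches the paper's (implicit) argument exactly: since every norm-closed $A$-submodule is in particular an $A$-submodule, the corollary's hypothesis is stronger than that of Theorem~\ref{one-sided}, which therefore applies verbatim — the paper gives no separate proof for precisely this reason. Your optional self-contained variant is fine too, up to the minor point that for non-unital $A$ one should work with the generated submodule $\mathbb{C}x + Ax$ rather than $Ax$ alone, since $x$ need not belong to $Ax$.
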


As a difference to  the Hilbert space situation, injective bounded module operators on Hilbert C*-modules which preserve any (closed) Hilbert C*-submodule of them need not to be surjective, in particular, if $Z(M(A)))$ is infinite-dimensional. Indeed, suppose $T = d \cdot {\rm id}_X$ where $|d| \in Z(M(A))$ admits a strictly positive spectrum which is not bounded away from zero. Then $T$ is injective and $T$ is not surjective but still preserves all Hilbert $A$-submodules. This cannot happen if $Z(M(A)))$ is finite-dimensional. 

Non-surjective (C*-)linear isometries of C*-algebras or of Hilbert C*-modules have still to be investigated. In any case, they are injective. For such maps on C*-algebras first structural results were obtained by Cho-Ho Chu and Ngai Ching Wong in \cite{Chu_Wong}.

\section{Invariant C*-submodules for bounded module operators}

We are going to investigate a slight generalization of \cite[Problem 2.16]{Sharifi} adressing its bimodule aspects. In the sequel, we consider Hilbert C*-modules as two-sided Hilbert C*-modules. To fix some notions according to the contemporary use of them (cf.~\cite{aHNS}), for any two given C*-algebras $B$ and $A$, a left-Hilbert $B$-$A$ bimodule $X$ is a left Hilbert $A$-module together with a right action of $B$ by adjointable operators on $X$ that is implemented by a $*$-homomorphism $\phi: B \to {\rm End}_A(X)$ (i.e. $x \cdot b := \phi(b)(x)$). If the $*$-homomorphism $\phi$ implementing the right action of $B$ takes values in the C*-algebra ${\rm K}_A(X)$ of ``compact'' operators on $X$, we say that the Hilbert $B$-$A$ bimodule $X$ has compact right action. In case $\phi$ is a $*$-isomorphism of $B$ and of ${\rm K}_A(X)$ we call it a faithful compact right action. For a non-trivial norm-closed two-sided ideal $J \in B$ the compact right action of $J$ on $X$ is compatible if $\phi(B) \subseteq {\rm K}_A(X)$ and $J$ acts as $\phi(J)$ on $X$. 
In the even more particular case, that a $B$-$A$ Hilbert bimodule $X$ admits a faithful compact right action of $B$ on $X$ and the canonical two $A$-valued and $B$-valued inner products fulfil the equality  $\langle x,y \rangle_A z = x \langle y,z \rangle_B$ for any $x,y,z \in X$ we call $X$ a $B$-$A$ imprimitivity bimodule (identifing $B$ with $\phi(B) = {\rm K}_A(X)$), cf.~\cite{RW_1998}. In the latter situation $A$ and $B$ are strongly Morita equivalent, so not any pair $A$, $B$ of C*-algebras admits connecting $B$-$A$ imprimitivity bimodules $X$. Also, for a C*-algebra $A$ the set of all $A$-$A$ imprimitivity bimodules forms a (non-trivial, in general) group with the inner tensor product as operation, the strong Picard group, cf.~\cite{Landsman_2001, BW}. The unit of this group is $A$ considered as a Hilbert $A$-module over itself with $A$-valued inner product induced by multiplication and $*$-operation, an often used example. As a consequence, the set of all pairwise unitarily non-isomorphic $B$-$A$ imprimitivity bimodules can be quite manifold, cf.~\cite{Rae_1981}. 

Let us consider the set of all full Hilbert $J$-$I$ submodules $Y$ of $X$ with faithful compact right actions of $J$ for all suitable pairs of norm-closed two-sided essential ideals $I \subseteq A$ and $J\subseteq B$. The bijective correspondences of the partially ordered lattices of norm-closed two-sided (essential) ideals $I \in A$, resp.~$J \in B$, as well as of the partially ordered lattice of Hilbert $J$-$I$ subbimodules $Y$ of $X$ that are $J$-$I$ imprimitivity bimodules, are described in \cite[sections 2 and 3]{RW_1998} and in \cite[Thm.~2.1]{KQW_2024}. 

We must recall some facts about multiplier algebras of norm-closed two-sided essential ideals in C*-algebras. So for two norm-closed two-sided essential ideals $I_1\subseteq I_2 \subseteq A$ we have always injective $*$-isomorphisms of the respective multiplier algebras leading to isometric $*$-homomorphisms $M(A) \subseteq M(I_2) \subseteq M(I_1)$, cf.~\cite[Prop.~1.2.20]{AM_2003}. Recall that the set of all bounded $A$-linear operators on $X$ is isometrically-algebraically isomorphic to the right multiplier algebra of $B$, $RM(B)={\rm End}_A(X)$, and symmetrically the set of all bounded $B$-linear operators on $X$ is isometrically-algebraically isomorphic to the left multiplier algebra of $A$, $LM(A)={\rm End}_B(X)$, cf.~\cite[Thm.~1.5]{Lin_1991}. One has to be aware that there exist full Banach C*-modules $X$ that admit at least two $A$-valued inner products which induce equivalent norms, but which are not unitarily isomorphic, \cite{CC}, \cite[Ex.~2.3]{Lin_1992}, \cite[Ex.~6.2]{Brown}, \cite[Ex.~3.1, Ex.~4.3, Ex.~7.2]{F_1999}. The respective C*-algebras of ``compact'' $A$-linear operators and their multiplier algebras of all adjointable bounded $A$-linear operators are pairwise not $*$-isomorphic,  \cite[Prop.~5.3(iii)]{F_1999}. However, the derived two operator norms induced on the respective common Banach algebra of all bounded $A$-linear operators are equivalent, and by Huaxin Lin's theorem, the respective right multiplier algebras are always isomorphic as Banach algebras. As a consequence, we have to fix an $A$-valued inner product, and we have to consider Hilbert $A$-(sub)modules always as a pair of the Hilbert $A$-module itself and of the $A$-valued inner product under consideration. Since the two induced respective C*-algebras of all ``compact'' $A$-linear operators are strongly Morita equivalent (via $A$) they admit isomorphic lattices of norm-closed two-sided essential ideals, so there is no loss of generality of the subsequent considerations here.

The following statement is obvious for Hilbert C*-modules $X$ over simple C*-algebras $A$, in particular, for Hilbert spaces, because their only norm-closed ${\rm K}_A(X)$-$A$ subbimodules are $\{ 0 \}$ and $X$. Remarkably, if $A$ contains a non-trivial two-sided norm-closed ideal $I$ then every bounded module operator $T \in {\rm End}_A(X)$ possesses the respective non-trivial ${\rm K}_I(X)$-$I$ imprimitivity subbimodule $IX$ of $X$ as an invariant subbimodule. This  adds a new perspective to the invariant subspace/submodule problem of bounded module operators $T$ on Hilbert C*-modules.
Moreover, there exist Noetherian and Artinian C*-algebras $A$ such that they contain infinite lattices of essential norm-closed two-sided ideals, cf.~\cite{HW_2012}. As another class of interesting examples one can mention C*-algebras $A$, the local multiplier algebra $M_{loc}(A)$ of which cannot be isometrically identified with $M_{loc}(M_{loc}(A))$, a phenomenon which continues sometimes even to higher order derivations of local multiplier algebras, cf.~\cite{M_2013}.

\begin{thm}
    Let $A$ and $B$ two (non-unital, in general) strongly Morita equivalent C*-algebras and $\{ X, \langle .,. \rangle \}$ be a Hilbert $B$-$A$ bimodule with faithful compact right action of $B$. Let $I \subseteq A$ and $J \subseteq B$ be essential norm-closed two-sided ideals. 
Then any bounded $A$-module operator and any bounded $B$-module operator maps any norm-closed Hilbert $J$-$I$ subbimodule of $X$ with compatible faithful compact right action of $J$ into itself. Therefore, the selected set of invariant Hilbert $J$-$I$ subbimodules with compatible faithful compact right action of $J$ cannot rule out any bounded one-sided module operator from the set of its preservers, and this set of all Hilbert $J$-$I$-subbimodules of $X$ with compatible faithful compact right action of $J$ is invariant under the action of any bounded left $A$-linear or right $B$-linear operator. 
\end{thm}

\begin{proof}
Since the situation is symmetric with respect to $A$ and $B$, we consider $B$ as the set of ``compact'' $A$-linear operators on $X$. For the Banach algebra of all right multipliers of $B$ we have the Banach algebra isomorphism $RM(B)={\rm End}_A(X)$ by \cite[Thm.~1.5]{Lin_1991}. So, for any element $T \in {\rm End}_A(X)$ there exists a net $\{ b_\alpha : \alpha \in K \} \subset B$ such that $\{ bb_\alpha : \alpha \in K \}$ converges to $bT$ for any $b \in B$ with respect to the $B$-norm. 

Fix an $J$-$I$-submodule $Y$ of $X$ with the C*-valued inner product induced from $X$. The faithful compact right action of $B$ via $\phi: B \to {\rm K}_A(X)$ reduces to a compatible faithful compact right action of $J$ on $X$, just as for $Y$.
Let $\{ u_\beta : \beta \in L \}$ be an approximate identity of $J$. For any $y \in Y$ the norm-limit of the net $\{y \phi(u_\beta) : \beta \in L \} \subset Y$ exists inside $Y$ and equals to $y$. For a fixed element  $y\phi(u_\beta)$ the net $\{ y\phi(u_\beta b_\alpha) :\alpha \in L \}$ converges to $y (\phi(u_\beta) T) \in Y$ with respect to the $Y$-norm by the right $J$-invariance of $Y$. Finally, the net $\{ (y \phi(u_\beta)) T : \beta \in L \} \subset Y$ converges to $y T \in Y$ with respect to the $Y$-norm by the norm-continuity of $T$. This demonstrates the invariance of $Y$ under the right action of $RM(B)={\rm End}_A(X)$. In other words, the right $B$-invariance of $Y$ forces the right $RM(B)$-invariance of $Y$. 
\end{proof}

Note that the proof would work equally well for non-norm-closed subbimodules $Y$ as long as those are full as a left Hilbert $A$-module and as a right Hilbert $B$-module via $\phi:B \to {\rm K}_A(X)$ at the same time. The centers of $B={\rm End}_A(X)$ and of $A={\rm End}_B(X)$ are $*$-isomorphic to the centers $Z(M(A))=Z(M(B))$ of their (one-sided) multiplier algebras with respect to standard identifications of module operators on $X$. 

Now, we investigate bounded bijective one-sided module operators that preserve both the left/right C*-valued inner product values on a $B$-$A$ imprimitivity bimodule. 

\begin{thm}
Let $A$ and $B$ two (non-unital, in general) strongly Morita equivalent C*-algebras and $\{ X, \langle .,. \rangle \}$ be an $B$-$A$ imprimitivity bimodule. Then the set of all bijective bounded $A$-linear operators on $X$ which preserve both the $A$-valued and $B$-valued inner product values coincides with the subset $\{ u \cdot {\rm id}_X : u \in Z(M(A))=Z(M(B)) \,\, {\rm unitary} \}$. By symmetry, the same result is true if one takes the $B$-valued inner product on $X$ as primary and looks for respective bijective bounded $B$-linear operators on $X$.
\end{thm}

\begin{proof}
Since the bijective bounded $A$-linear operator $T: X \to X$ preserves the $A$-valued inner product $\langle .,. \rangle_A$ it has to be a surjective isometry by \cite[Thm.]{Lance_1994}. So it is adjointable with adjoint $T^*=T^{-1}$. Switching to the second ${\rm K}_A(X)$-valued inner product derived from the initial $A$-valued inner product we obtain the requested equality as $\theta_{x,y} = \theta_{T(x),T(y)} = T\theta_{x,y}T^* = T\theta_{x,y}T^{-1}$ for any $x,y \in X$. Since the linear hull of the operators $\{\theta_{x,y} : x,y \in X \}$ is norm-dense in ${\rm K}_A(X)$ and $T$ induces a $*$-automorphism of ${\rm K}_A(X)$ as only shown we get the equality $T\theta_{x,y}=\theta_{x,y}T$ for any $x,y \in X$. So, $T= u \cdot {\rm id}_X$ with $u \in Z(M(A))$ such that $uu^*=1_A$, and by the bijectivity of $T$, or by the commutativity of $Z(M(A))$, also $u^*u=1_A$. So $u$ is a unitary element. 

Conversely, $\langle ux,uy \rangle_A = u \langle x,y \rangle_A u^* = \langle x,y \rangle_A uu^* = \langle x,y \rangle_A$  for any $x,y \in X$ and any unitary element of $Z(M(A))$. Also, $\theta_{ux,uy} = u\theta_{x,y}u^* = \theta_{x,y}uu^*= \theta_{x,y}$ for any $x,y \in X$ and any unitary element of $Z(M(A)) = Z({\rm End}_A(X))$ identifying $u$ with $u \cdot {\rm id}_X$.
\end{proof}

Note that the equality $\langle x,y \rangle_A z = x \langle y,z \rangle_B$ is preserved for any bijective adjointable bounded $A$-linear operator $T$ on any $B$-$A$ imprimitivity bimodule $\{ X, \langle .,. \rangle \}$ since the arbitrarily selected elements $x,y,z \in X$ simply can be replaced by elements $T(x),T(y),T(z)$. We should require adjointability of $T$ to stay in the unitary equivalence class of $X$, and consequently, in the $*$-isomorphism class of the respective resulting C*-algebra of ``compact'' operators. For bijective non-adjointable $T$ on $X$ we would compare two unitarily non-isomorphic C*-valued inner product structures on $X$, and therefore, two different equalities. Both these equalities characterize $X$ as an imprimitivity bimodule of $A$ with the respective non-$*$-isomorphic C*-algebras ${\rm K}_A(X)$.

\medskip
To get more significant preservation results one has to specify other intermediate sets of preferably one-sided invariant submodules of Hilbert C*-modules to be investigated. On the other side, the theory of (modular) eigenvalues and eigen-submodules of bounded module operators on certain classes of Hilbert C*-modules becomes much more interesting and diverse. To investigate more general left-Hilbert $B$-$A$ correspondences and their preservers as bimodules for any two C*-algebras $A$ and $B$ one needs some kind of $*$-representation theory of C*-algebras $B$ in strongly Morita equivalent to $A$ C*-algebras and their respective multiplier algebras. 

\smallskip \noindent
{\bf Acknowledgement:} 
The core ideas of this work were obtained during the UK Operator Algebras 
Conference 2025 in Belfast and the Satellite Workshop
"Crossed products and groupoid C*-algebras" there. 
The author would like to thank the Isaac Newton Institute 
for Mathematical Sciences, Cambridge, for support and 
hospitality during the programme "Topological groupoids 
and their C*-algebras" where work on this paper was 
undertaken. This work was supported by EPSRC grant 
no.~EP/Z000580/1.

\end{document}